\documentclass[11pt]{amsproc}

\setlength{\textwidth}{15cm} \setlength{\textheight}{22cm}
\setlength{\oddsidemargin}{0.5cm} \setlength{\topmargin}{0cm}
\setlength{\evensidemargin}{0.5cm} \setlength{\topmargin}{0cm}

\usepackage{amsmath, amssymb, amsthm, latexsym,nicefrac,setspace}
\usepackage[pagebackref,colorlinks,linkcolor=blue,citecolor=blue,urlcolor=blue,hypertexnames=true]{hyperref}
\usepackage{amsrefs}
\usepackage{thmtools}
\usepackage{mathtools}

\usepackage[usenames]{color}

\newcommand{\textoverline}[1]{$\bar{\mbox{#1}}$}

\theoremstyle{plain}
\newtheorem{theorem}{Theorem}[section]
\newtheorem{question}[theorem]{Question}
\newtheorem{lemma}[theorem]{Lemma}
\newtheorem{corollary}[theorem]{Corollary}
\newtheorem{proposition}[theorem]{Proposition}

\theoremstyle{remark}
\newtheorem{remark}[theorem]{Remark}

\input{xy}
\xyoption{all}

\author{Vadim Alekseev}
\address{Vadim Alekseev, TU Dresden, Germany}
\email{vadim.alekseev@tu-dresden.de}

\author{Andreas Thom}
\address{Andreas Thom, TU Dresden, Germany}
\email{andreas.thom@tu-dresden.de}

\title[Maximal discrete subgroups]{Maximal discrete subgroups in  unitary groups \\ of operator algebras}
\subjclass[2020]{46L10, 22E40, 20F38.}

\begin{document}

\onehalfspace

\begin{abstract}
We show that if a group $G$ is mixed-identity-free, then the projective unitary group of its group von Neumann algebra contains a maximal discrete subgroup containing $G$. The proofs are elementary and make use of free probability theory. In addition, we clarify the situation for $C^*$-algebras.
\end{abstract}

\maketitle

\section{Introduction}

It has been of interest since the beginning of the theory of topological groups to study discrete subgroups for familiar topological groups, such as connected Lie groups or other locally compact groups arising from arithmetic or geometry \cites{MR29918, MR374340, MR0209363,MR212021,MR2198218,MR41145,MR171781}.  Of special interest is the question if a discrete subgroup is maximal among discrete subgroups, i.e.\ when every group containing it properly is not discrete, or if, at least, it is contained in a maximal discrete subgroup. For ${\rm PU}(n)$, the Jordan--Schur theorem states that any discrete (and hence finite) subgroup has an abelian normal subgroup of uniformly bounded index. This implies that some finite subgroups are contained in maximal discrete subgroups -- we will clarify the situation in Proposition \ref{compact}. This realm of questions becomes even more interesting for non-compact groups. For example, Helling's theorem \cite{MR228437} states that a maximal discrete subgroup of ${\rm PSL}(2,\mathbb R)$ that is commensurable with ${\rm PSL}(2,\mathbb Z)$ is conjugate to a congruence subgroup $\Gamma^+_0(N)$ for some square-free integer $N$. In \cite{MR2198218}, Belolipetsky and Lubotzky provided examples of maximal discrete subgroups of the isometry group of higher-dimensional hyperbolic space arising as commensurators of certain non-arithmetic lattices. 

Typical existence results of maximal discrete subgroups usually show that the family of discrete subgroups is uniformly discrete and employ some contraction mechanism near the identity element, for example in the usual operator norm on ${\rm GL}(n,\mathbb C)$
$$\|1-uvu^{-1}v^{-1}\| \leq 2 \|1-u\| \|1-v\| \|u^{-1}\| \|v^{-1}\|.$$  
Any result like this provides a tool to construct smaller and smaller elements in a group once it gets close enough to the unit element. If the group is assumed to be discrete in addition one obtains nilpotency phenomena near the identity. More precisely, there is the following classical result of Zassenhaus \cite{MR3069692} which says in modern form: If $G$ is a semisimple Lie group there exists a neighbourhood $\Omega$ of the identity in $G$ and a constant $C > 0$, such that any discrete subgroup $\Gamma$ which is generated by $\Gamma \cap \Omega$ contains a nilpotent subgroup of index bounded by $C$. This has been generalized by Margulis to cover isometry groups of spaces of negative sectional curvature.

The situation is much different in the non-locally compact setting, but a similar reasoning works for example in the unitary group of say a $C^*$-algebra and hinges only on submultiplicativity of the norm which implies contractivity of the commutator map near the unit. Again, this implies by a Zassenhaus argument that any discrete subgroup $\Gamma$ of the unitary group of a $C^*$-algebra $A$ satisfies that the group generated by the set
$\{g \in \Gamma \mid \|1-g\|<1/2 \}$ is abelian and normal, see Theorem \ref{cstar}. Nothing is known about the concrete examples of maximal discrete subgroups.

We aim to study this phenomenon in the context of finite von Neumann algebras, where the unitary group is metrized by the $2$-norm and no contractivity of the commutator map is available in general. We will make use of free probability theory to overcome this problem in the situation of the group von Neumann algebra $LG$, and show in many cases that nevertheless the class of discrete subgroups of the projective unitary group ${\rm PU}(LG)$ containing $G$ is uniformly discrete. As a consequence, $G$ is contained in a maximal discrete subgroup. We do not know if or when $G \subset {\rm PU}(LG)$ is maximal discrete itself.

\section{Preparations using free probability}

Let $(M,\tau)$ be a finite von Neumann algebra and we will typically assume that it is a ${\rm II}_1$-factor. The $2$-norm on $M$ is defined as $\|x\|_2 = \tau(x^*x)^{1/2}$. We denote its unitary group by ${\rm U}(M)$ and its projective unitary group by ${\rm PU}(M):={\rm U}(M)/S^1$. Consider unitaries $u,v \in {\rm U}(M)$ which are  freely independent, that is to say that they lie in  freely independent subalgebras in the sense of Voiculescu \cite{MR1217253}. We set $\alpha := \tau(u)$ and $\beta := \tau(v)$. Note that because of freeness we have
$$0 = \tau((u-\alpha)(v-\beta)) = \tau(uv) - \beta \tau(u) - \alpha \tau(v) + \alpha \beta$$
and hence $\tau(uv) = \tau(u)\tau(v)$ for any  freely independent pair of unitaries $u,v \in {\rm U}(M)$. We aim to compute the trace of $uvu^*v^*$. Again, by freeness, we obtain:
\begin{eqnarray*}
0 &=& \tau((u-\alpha)(v-\beta)(u^* - \bar \alpha)(v^* - \bar \beta)) \\
&=& \tau(uvu^*v^*)  - \alpha \tau(vu^*v^*) - \beta \tau(uu^*v^*) - \bar \alpha \tau(uvv^*) - \bar \beta \tau(uvu^*) \\
&& + \alpha \beta \tau(u^*v^*) + |\alpha|^2 \tau(vv^*) + \alpha \bar \beta \tau(vu^*)  + \bar \alpha \beta \tau(uv^*) + |\beta|^2 \tau(uu^*) + \bar \alpha \bar \beta \tau(uv) \\
&& - |\alpha|^2 \beta \tau(v^*) - \alpha |\beta|^2 \tau(u^*) - |\alpha|^2 \bar \beta \tau(v) - \bar \alpha |\beta|^2 \tau(u)  + |\alpha|^2|\beta|^2 \\
&=&  \tau(uvu^*v^*) - 2|\alpha|^2 - 2 |\beta|^2  + 4 |\alpha|^2  |\beta|^2 + |\alpha|^2 + |\beta|^2  - 4 |\alpha|^2  |\beta|^2 + |\alpha|^2  |\beta|^2 \\
&=& \tau(uvu^*v^*) - |\alpha|^2  - |\beta|^2 + |\alpha|^2  |\beta|^2.
\end{eqnarray*}
Hence, we conclude that $$\tau(uvu^*v^*) = |\alpha|^2  + |\beta|^2 - |\alpha|^2  |\beta|^2 = 1 - (1-|\tau(u)|^2)(1-|\tau(v)|^2)$$ for any  freely independent pair of unitaries $u,v \in {\rm U}(M)$. 
On ${\rm U}(M)$, we consider the length function $\ell(u):= \|1-u\|_2 = \sqrt{2-2 \Re(\tau(u))}$.
Since
$$(1-|\tau(u)|^2) = (1+ |\tau(u)|)(1-|\tau(u)|) \leq 2 (1-|\tau(u)|) \leq 2 - 2\Re(\tau(u)) = \ell(u)^2,$$we obtain
\begin{eqnarray*}
\ell([u,v]) &=& \sqrt{2-2 \Re(\tau(uvu^*v^*))} \\
&=& \sqrt{2(1-|\tau(u)|^2)(1- |\tau(v)|^2)} \\
&\leq&\sqrt{2} \cdot \ell(u) \ell(v)
\end{eqnarray*} for any  freely independent pair of unitaries $u,v \in {\rm U}(M)$. With only little more work, we also obtain a lower bound for $\ell([u,v])$.
Indeed, define the projective length function $$\bar \ell(u) = \inf_{|\lambda|=1 } \|\lambda - u\|_2 = \sqrt{2(1-|\tau(u)|)},$$ which metrizes ${\rm PU}(M)$ in a natural way. Note that
$\mu - \mu^2/4 \geq 1/2 \cdot \mu$ for $\mu \in [0,2]$. If $u,v$ are freely independent unitaries, we get:
\begin{eqnarray*}
\ell([u,v])^2 &=& 2(1 - \Re \tau([u,v])) \\
&=& 2 (1-|\tau(u)|^2)(1- |\tau(v)|^2) \\
&=& 2 (1-(1-\bar\ell(u)^2/2)^2) (1-(1-\bar\ell(v)^2/2)^2 \\
&=& 2 (  \bar \ell(u)^2 - \bar \ell(u)^4/4) ( \bar \ell(v)^2 - \bar \ell(v)^4/4) \\
&\geq& 2 (1/2)^2 \cdot \ell(u)^2 \ell(v)^2
\end{eqnarray*}
Hence, we conclude that
$$\ell([u,v]) \geq 1/\sqrt{2} \cdot \bar \ell(u) \bar \ell(v)$$
and we may summarize the computations as follows:
\begin{theorem}
Let $u,v \in {\rm U}(M)$ be freely independent unitaries. Then, we have
$$1/\sqrt{2} \cdot \bar \ell(u) \bar \ell(v) \leq \bar\ell([u,v])= \ell([u,v]) \leq \sqrt{2} \cdot \bar \ell(u) \bar \ell(v) \leq \sqrt{2} \cdot \ell(u) \ell(v).$$
\end{theorem}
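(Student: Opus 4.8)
The plan is to assemble the theorem directly from the commutator trace formula already extracted from freeness, namely $\tau([u,v]) = 1-(1-|\tau(u)|^2)(1-|\tau(v)|^2)$. First I would record one elementary observation that does all the conceptual work: since $|\tau(u)|,|\tau(v)| \le 1$, the right-hand side is real and lies in $[0,1]$. This is exactly what collapses the two length functions on the commutator. Indeed, writing $w=[u,v]$, we have $\bar\ell(w)=\sqrt{2(1-|\tau(w)|)}$ and $\ell(w)=\sqrt{2(1-\Re\tau(w))}$, so the claimed equality $\bar\ell([u,v])=\ell([u,v])$ is precisely the statement that $|\tau(w)|=\Re\tau(w)$, i.e.\ that $\tau(w)$ is real and nonnegative --- which the formula delivers.

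For the upper bound, I would substitute the two factor estimates $(1-|\tau(u)|^2)\le\ell(u)^2$ and $(1-|\tau(v)|^2)\le\ell(v)^2$, each following from the chain $(1+|\tau(u)|)(1-|\tau(u)|)\le 2(1-|\tau(u)|)\le 2(1-\Re\tau(u))=\ell(u)^2$, into the identity $\ell([u,v])^2 = 2(1-|\tau(u)|^2)(1-|\tau(v)|^2)$. This yields $\ell([u,v])\le\sqrt{2}\,\ell(u)\ell(v)$. The last inequality $\sqrt{2}\,\bar\ell(u)\bar\ell(v)\le\sqrt{2}\,\ell(u)\ell(v)$ is then immediate from $\bar\ell\le\ell$, which holds term-by-term because $|\tau(u)|\ge\Re\tau(u)$ forces $1-|\tau(u)|\le 1-\Re\tau(u)$.

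For the lower bound, the idea is to express the same identity through $\bar\ell$ rather than $\ell$. Using $|\tau(u)|=1-\bar\ell(u)^2/2$, I rewrite $1-|\tau(u)|^2 = \bar\ell(u)^2 - \bar\ell(u)^4/4 = \mu-\mu^2/4$ with $\mu=\bar\ell(u)^2\in[0,2]$, and apply the one-variable estimate $\mu-\mu^2/4\ge\mu/2$ valid on $[0,2]$; doing the same for $v$ and multiplying gives $\ell([u,v])^2 \ge 2\cdot(1/2)^2\,\bar\ell(u)^2\bar\ell(v)^2$, that is $\ell([u,v])\ge (1/\sqrt{2})\,\bar\ell(u)\bar\ell(v)$.

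I do not anticipate a genuine obstacle, since the only nontrivial ingredient --- the trace identity for $\tau(uvu^*v^*)$ --- has already been computed from freeness. The one point that deserves to be stated explicitly rather than glossed over is the middle equality $\bar\ell([u,v])=\ell([u,v])$: it is tempting to treat the two length functions as interchangeable throughout, but they coincide here only because the commutator's trace happens to be real and nonnegative, a feature special to commutators of freely independent unitaries and not shared by arbitrary words.
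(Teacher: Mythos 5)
Your proposal is correct and takes essentially the same route as the paper: the same freeness-derived identity $\tau([u,v]) = 1-(1-|\tau(u)|^2)(1-|\tau(v)|^2)$, the same factor estimate $(1-|\tau(u)|^2)\le 2(1-|\tau(u)|)\le \ell(u)^2$ for the upper bounds, and the same elementary estimate $\mu-\mu^2/4\ge\mu/2$ on $[0,2]$ for the lower bound; you are in fact more explicit than the paper about the middle equality $\bar\ell([u,v])=\ell([u,v])$, which rests on the commutator's trace being real and nonnegative. The only assembly point worth making explicit is that the inequality $\ell([u,v])\le\sqrt{2}\,\bar\ell(u)\bar\ell(v)$ is obtained by truncating your chain at $(1-|\tau(u)|^2)\le 2(1-|\tau(u)|)=\bar\ell(u)^2$ before weakening to $\ell(u)^2$, since the two bounds you do state ($\ell([u,v])\le\sqrt{2}\,\ell(u)\ell(v)$ and $\bar\ell(u)\bar\ell(v)\le\ell(u)\ell(v)$) do not by themselves imply it.
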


For the rest of the paper we fix a sequence of words $(w_n)_n$ in $F_2= \langle x,y \rangle$ such that $w_1=x$ and inductively $w_{n+1} = [w_n,y^{n}xy^{-n}],$ i.e.\ $w_2=[x,yxy^{-1}], w_3=[[x,yxy^{-1}],y^2xy^{-2}],$ and so on. If $u,v$ are freely independent and $v$ is a Haar unitary (i.e. a unitary with $\tau(v^n)=0$ for all $n \in \mathbb Z \setminus \{0\}$), it is easy to see that $ u,vuv^*,v^2u(v^*)^2,\dots,$ are freely independent too. We consider the sequence $(w_n(u,v))_n$, for $i \in \mathbb N$, i.e.
$$u_1=u, u_2=[u,vuv^*], u_3=\left[[u,vuv^*],v^2u(v^*)^2 \right], \dots.$$
Thus, we obtain from the previous theorem by induction
$$(1/\sqrt{2})^{n-1} \cdot \bar\ell(u)^n \leq \ell(w_n(u,v)) \leq (\sqrt{2})^{n-1} \cdot \ell(u)^n.$$

The upper bound was rather surprising for us at first, since $\ell(u)< 1/\sqrt{2}$ does not seem to be a severe assumption and we may assume in addition that the subgroup generated by $u$ in ${\rm PU}(M)$ is discrete, we may even assume that $u'$ is a Haar unitary in a corner $pMp$ and $u = u' + p^{\perp}$ for a projection $p$ with $\tau(p)<1/4$.

\begin{corollary} Let $M$ be a tracial von Neumann algebra. Let $u,v \in {\rm U}(M)$ be freely independent unitaries with $\ell(u)<1/\sqrt{2}$ and $v$ a Haar unitary. For every $\varepsilon>0$, there exists $w \in \mathbb F_2$, such that $\ell(w(u,v))< \varepsilon$.
\end{corollary}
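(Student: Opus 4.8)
The plan is to exhibit the required word explicitly as one of the $w_n$ from the fixed sequence, and to read off its smallness directly from the upper bound already recorded just before the statement. Concretely, I would set $w = w_n$ and invoke the inductive estimate
$$\ell(w_n(u,v)) \leq (\sqrt{2})^{n-1} \cdot \ell(u)^n = \frac{1}{\sqrt{2}}\left(\sqrt{2}\,\ell(u)\right)^n.$$
The hypothesis $\ell(u) < 1/\sqrt{2}$ is precisely what makes the base $q := \sqrt{2}\,\ell(u)$ satisfy $q < 1$, so the right-hand side is $\tfrac{1}{\sqrt{2}}\, q^n$, a geometric sequence tending to $0$ as $n \to \infty$.

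Given $\varepsilon > 0$, it then suffices to choose $n$ large enough that $\tfrac{1}{\sqrt{2}}\, q^n < \varepsilon$; since $0 \le q < 1$, such an $n$ always exists. (Explicitly, any integer $n > \log(\sqrt{2}\,\varepsilon)/\log q$ works when $\varepsilon < 1/\sqrt{2}$, and already $n = 1$ works otherwise, since then $\ell(w_1(u,v)) = \ell(u) < 1/\sqrt{2} \le \varepsilon$.) Taking $w = w_n$ for such an $n$ completes the argument.

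The only real content sits inside the upper bound itself, which I would justify exactly as indicated in the excerpt: apply the Theorem inductively to the pairs $a_n := w_n(u,v)$ and $b_n := v^n u (v^*)^n$. The inductive step rests on two observations. First, $\ell(b_n) = \ell(u)$, because $b_n$ is a unitary conjugate of $u$ and the trace is conjugation invariant, so $\tau(b_n) = \tau(u)$. Second, $a_n$ and $b_n$ are freely independent, and this is the genuine crux: it is where the Haar hypothesis on $v$ is used, since that hypothesis guarantees that $u, vuv^*, v^2u(v^*)^2, \dots$ are mutually freely independent. As $a_n = w_n(u,v)$ lies in the algebra generated by the first $n$ of these unitaries while $b_n$ is the $(n+1)$-st, freeness of the pair $(a_n, b_n)$ follows. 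With both facts in hand, the Theorem yields $\ell(a_{n+1}) = \ell([a_n, b_n]) \le \sqrt{2}\,\ell(a_n)\,\ell(u)$, and the claimed bound follows by induction starting from $\ell(a_1) = \ell(u)$.
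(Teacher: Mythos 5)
Your proposal is correct and takes essentially the same route as the paper: the corollary is stated there as an immediate consequence of the inductive bound $\ell(w_n(u,v)) \leq (\sqrt{2})^{n-1}\ell(u)^n$, which the paper derives from the Theorem together with the observation that the Haar hypothesis makes $u, vuv^*, v^2u(v^*)^2, \dots$ freely independent, and then one lets $n \to \infty$ using $\sqrt{2}\,\ell(u) < 1$. Your elaboration of the inductive step (conjugation invariance of the trace giving $\ell(v^n u (v^*)^n) = \ell(u)$, and freeness of $w_n(u,v)$ from the $(n+1)$-st conjugate) is exactly the argument the paper leaves implicit.
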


\begin{corollary}
Let $M$ be a separable ${\rm II}_1$-factor. There exists a hyperfinite ${\rm II}_1$-factor $R \subset M$ and Haar unitaries $v_1,\dots,v_n,\dots$ in ${\rm U}(R)$ such that the following holds. For every $u \in {\rm U}(M)$ satisfying $\ell(u)< 1/\sqrt{2}$ and for every $\varepsilon>0$, there exists $w \in F_2$ and $n \in \mathbb N$ such that $\ell(w(u,v_n))< \varepsilon$.
\end{corollary}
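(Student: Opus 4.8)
The plan is to reduce to the preceding corollary by passing to a tracial ultrapower, the point being that while no single Haar unitary in a fixed hyperfinite subfactor can be genuinely free from an arbitrary $u \in {\rm U}(M)$, a suitably chosen \emph{sequence} can be asymptotically free from all of $M$ at once. Fix a free ultrafilter $\omega$ on $\mathbb N$ and recall that the tracial ultrapower $M^\omega$ is again a tracial von Neumann algebra, with $M$ sitting inside as the constant sequences and with its trace $\tau_\omega$, and hence $\ell$ and $\bar\ell$, computed entrywise through the ultralimit. The single ingredient I would establish is the following: there exist a hyperfinite ${\rm II}_1$-subfactor $R \subset M$ and Haar unitaries $v_1,v_2,\dots \in {\rm U}(R)$ such that the class $V:=(v_n)_n \in M^\omega$ is a Haar unitary that is freely independent from $M \subset M^\omega$.

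Granting this, the transfer is soft. Given $u \in {\rm U}(M)$ with $\ell(u)<1/\sqrt2$ and $\varepsilon>0$, view $u$ as a constant sequence; since $\tau_\omega(u)=\tau(u)$ we still have $\ell(u)<1/\sqrt2$ in $M^\omega$. As $V$ is a Haar unitary free from $u$, the preceding corollary, applied inside the tracial von Neumann algebra $M^\omega$ to the pair $(u,V)$, produces a word $w \in F_2$ with $\ell(w(u,V))<\varepsilon$. Because multiplication, inversion and adjoint are all performed entrywise in $M^\omega$ and $u$ is constant, we have $w(u,V)=(w(u,v_n))_n$, and therefore $\ell(w(u,V))=\lim_{n\to\omega}\ell(w(u,v_n))$. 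Hence $\{\,n : \ell(w(u,v_n))<\varepsilon\,\}\in\omega$; in particular this set is nonempty, and any $n$ in it yields the required pair $(w,n)$.

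The main obstacle is thus the construction of the asymptotically free sequence $(v_n)$ \emph{inside a single hyperfinite subfactor}. The underlying phenomenon is already visible in $L\mathbb F_2=\langle s,u_0\rangle$: the conjugates $s^n u_0 s^{-n}$ are asymptotically free from $M$ as $n\to\infty$, since for large $n$ the reduced word lengths force every alternating centered mixed moment in $u_0,s$ and $s^n u_0 s^{-n}$ to its free value $0$. This shows asymptotically free sequences exist in $M$, but the conjugates $s^n u_0 s^{-n}$ generate a copy of $L\mathbb F_\infty$ and are far from hyperfinite; the real work is to place such a sequence in one hyperfinite $R$ while keeping it asymptotically free from all of $M$. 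Here I would exploit separability: since freeness is a $\|\cdot\|_2$-continuous, hence closed, condition, it suffices to secure asymptotic freeness of $(v_n)$ from a fixed $\|\cdot\|_2$-dense sequence in the unit ball of $M$, whereupon freeness from $M$ follows automatically.

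I would then build $R=\overline{\bigcup_k R_k}$ as an increasing union of finite-dimensional subalgebras and, by a diagonal procedure, choose at stage $n$ a Haar unitary $v_n$ in some $R_{k(n)}$ that is $(1/n)$-free, up to moments of order $n$, from the first $n$ elements of the dense sequence; passing the resulting estimates through the ultralimit then makes $V=(v_n)_n$ a Haar unitary free from $M$. The hard part, and where free probability genuinely enters, is the inductive step: simultaneously meeting the three constraints on $v_n$ — being (approximately) Haar, lying in the growing hyperfinite tower, and being approximately free from a prescribed finite subset of $M$ that need not be approximable by matrix subalgebras. Verifying that Voiculescu-type asymptotic freeness can be arranged along such a diagonal, for an arbitrary separable ${\rm II}_1$-factor, is the crux of the argument, everything else being the routine ultrapower bookkeeping carried out above.
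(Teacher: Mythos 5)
Your ultrapower transfer is exactly the paper's argument: the paper likewise passes to $M^\omega$, applies the preceding results to the pair $(u,v)$ with $v=[(v_n)_n]$ free from $M$, and then passes back to some $v_n$ along the ultralimit. That part of your proposal is correct and complete. The gap is the ingredient you yourself flag as the crux: the existence of a hyperfinite ${\rm II}_1$-subfactor $R\subset M$ and genuine Haar unitaries $v_n\in {\rm U}(R)$ whose class $[(v_n)_n]\in M^\omega$ is freely independent from the diagonal copy of $M$. The paper does not prove this either --- it is precisely a theorem of Popa \cite{MR1372533}, which the paper invokes in the stronger form that one may even take $R'\cap M=\mathbb C$. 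Your proposed diagonal construction is a plausible opening move, but the inductive step --- producing, inside a growing hyperfinite tower, a unitary that is approximately Haar and approximately free from a prescribed finite subset of an \emph{arbitrary} separable ${\rm II}_1$-factor --- is the entire content of Popa's theorem, and it does not follow from Voiculescu-type (random matrix) asymptotic freeness: the given elements of $M$ need not admit any matrix model compatible with the tower you are building, and a priori $M$ need not contain any free pair of subalgebras at all. Popa's proof uses genuinely different techniques (incremental patching and maximality arguments in the ultrapower), so leaving this step ``to be verified'' leaves the proof incomplete at its essential point; citing \cite{MR1372533} would close it.

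A smaller, fixable inaccuracy: you cannot take the $v_n$ to be Haar unitaries lying in the finite-dimensional algebras $R_{k(n)}$, since a unitary in a finite-dimensional von Neumann algebra has finite spectrum and hence is never Haar. One would instead have to place genuine Haar unitaries in $R$ itself, $\|\cdot\|_2$-close to the finite-dimensional layers; as freeness is tested on finitely many moments at a time this is harmless, but it is another indication that the construction has not actually been carried out. In summary: your reduction to the preceding corollary via $M^\omega$ coincides with the paper's proof, and what is missing is exactly the appeal to (or a proof of) Popa's free-independence theorem, which is where all the real work lies.
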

\begin{proof}
By work of Popa \cite{MR1372533}, we may assume that $R' \cap M= \mathbb C$ and that $v = [(v_n)_n] \in {\rm U}(M^{\omega})$ is free from $M$ inside $M^{\omega}$. Now apply the previous theorem and pass to some $v_n$ along the ultralimit. \end{proof}

\begin{remark}
Sources of increasing sequences of discrete groups in ${\rm PU}(M)$ that become less and less discrete come from many sources. For example, it is easy to see that $\mathbb Z \subset {\rm PU}(L\mathbb Z)$ is not maximal discrete, in fact ${\rm U}(L\mathbb Z)$ does not admit a maximal discrete subgroup at all. Another example is $\cup_n {\rm Sym}(2^n) \subset {\rm U}(\otimes_n M_2(\mathbb C))$. More artificially, we may consider ${\rm U}(LF_2)$ and pick a partition of $1$ into orthogonal projections $(p_n)_n$ of trace $2^{-n}$. Then
$$\prod_{n\geq 1} {\rm U}((LF_2)_{2^{-n}}) \subset {\rm U}(LF_2)$$
and $(LF_2)_{2^{-n}} = LF_{1 + 2^{2n}}$ by results of Voiculescu \cite{MR1217253}, so that $\bigoplus_{n \geq 1} F_{1+2^{2n}}$ is a subgroup of ${\rm U}(LF_2)$. This last example also shows clearly, that there is no reason to expect any form of commutator contractivity near the identity in general.

\end{remark}

\section{Applications to group von Neumann algebras}

We will now turn our attention to group von Neumann algebras. Let $G$ be a group.  A sequence $(g_n)_n$ of elements in $G$ is called asymptotically free if $(g_n)_n$ is free from the diagonal copy of $G$ in $\prod_n G$. Equivalently, for every $k \in \mathbb N$, $s_1,\dots,s_k \in G \setminus \{1\}$ and every $\varepsilon_1,\dots,\varepsilon_k \in \mathbb Z \setminus \{0\}$, there exists $n \in \mathbb N$, such that
\[
s_1 g_n^{\varepsilon_1} s_2 g_n^{\varepsilon_2} s_3 \cdots g_n^{\varepsilon_k} \neq 1
\]
Groups admitting an asymptotically free sequence have been studied for a long time.

Recall that a mixed identity for $G$ is a word $w \in \mathbb Z \ast G = \langle G, t\rangle$ in one variable $t$ with coefficients in $G$ such that $w$ evaluates to the identity if any element of $G$ is substituted for $t$. A group is called mixed identity free (MIF) if it does not satisfy any non-trivial mixed identity. We refer the reader to \cite{MR3556970} for further details around this notion, emphasizing the following statement.

\begin{proposition}[{\cite[proof of Proposition 5.3 and Remark 5.1]{MR3556970}}]
Let $G$ be a group. The following are equivalent:
\begin{enumerate}
\item The group $G$ contains an asymptotically free sequence.
\item There is no mixed identity $w \in \mathbb Z \ast G$ for $G$.
\item $G$ is mixed identity free, i.e.\ there exists no mixed identity in $\mathbb F_\infty \ast G$.
\end{enumerate}
\end{proposition}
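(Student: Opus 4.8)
The plan is to prove the cycle (3) $\Rightarrow$ (2) $\Rightarrow$ (1) $\Rightarrow$ (3). The implication (3) $\Rightarrow$ (2) is immediate, since a mixed identity in the single variable $t$ is in particular a mixed identity in $\mathbb{F}_\infty \ast G$, so the absence of the latter forbids the former. The reformulation I would use throughout is that a sequence $(g_n)_n$ is asymptotically free exactly when the homomorphism $\varepsilon \colon \mathbb{Z}\ast G \to \prod_n G$ determined by $t \mapsto (g_n)_n$ together with the diagonal embedding on $G$ is injective: the displayed combinatorial condition says precisely that every nontrivial reduced word in $\mathbb{Z}\ast G$ is sent to a tuple that is nontrivial in some coordinate, and by conjugation-invariance it suffices to test the cyclically reduced words of the alternating form appearing in the statement.

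For (2) $\Rightarrow$ (1) I would produce the witnesses coordinatewise. Assuming $G$ countable, enumerate the countably many nontrivial reduced words $w_1, w_2, \dots$ of $\mathbb{Z}\ast G$. By (2) none of them is a mixed identity, so for each $j$ there is an $h_j \in G$ with $w_j(h_j) \neq 1$. Setting $g_n := h_n$, every nontrivial reduced word $w_j$ is nonzero in the $j$-th coordinate of $\prod_n G$, so $\varepsilon$ is injective and $(g_n)_n$ is asymptotically free. For general $G$ one replaces the index set $\mathbb{N}$ by $G$ itself: condition (2) is exactly the statement that the universal substitution $\mathbb{Z}\ast G \to \prod_{g \in G} G$, $t \mapsto (g)_{g\in G}$, is injective, which is the same freeness statement over the index set $G$.

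The substantial implication is (1) $\Rightarrow$ (3), and I would first upgrade a single asymptotically free element to an infinite jointly free family. By (1) and the reformulation above we have an embedding $G \ast \mathbb{Z} \hookrightarrow \prod_n G$ over $G$, with $\mathbb{Z} = \langle t\rangle$ and $t \mapsto (g_n)_n$. A standard Bass--Serre/ping-pong argument on the tree of $G \ast \mathbb{Z}$ shows that $G \ast \mathbb{Z}$ contains a copy of $G \ast \mathbb{F}_\infty$ respecting the factor $G$: using the hyperbolic elements $t$ and $sts^{-1}$ for $s \in G$, whose axes pass through the vertex fixed by $G$ but along distinct edges, one extracts free generators $y_1, y_2, \dots$ of infinite order that freely generate a free group free from $G$. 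Suppose now, towards a contradiction, that $G$ satisfies a nontrivial mixed identity $w(t_1,\dots,t_m) \in \mathbb{F}_m \ast G$. Substituting $t_i \mapsto y_i$ gives a homomorphism $\mathbb{F}_m \ast G \to \prod_n G$ that is injective on $\langle G, y_1,\dots,y_m\rangle = G \ast \mathbb{F}_m$, so $w(y_1,\dots,y_m) \neq 1$ in $\prod_n G$. On the other hand each $y_i = (y_i^{(n)})_n$ has coordinates in $G$, and since $w$ is a mixed identity, $w(y_1^{(n)},\dots,y_m^{(n)}) = 1$ for every $n$; hence $w(y_1,\dots,y_m) = 1$, a contradiction. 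Therefore $G$ is MIF, establishing (3).

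The main obstacle is precisely the ping-pong step in (1) $\Rightarrow$ (3): passing from one free element to an infinite family that is jointly free and free from $G$, while guaranteeing that the generators have infinite order so that the substitution $\mathbb{F}_m \ast G \to \prod_n G$ is genuinely injective. The torsion of $G$ must not be allowed to collapse the free generators, which is why the construction uses honestly hyperbolic elements of $G \ast \mathbb{Z}$ rather than conjugates of a possibly torsion element of $G$; once injectivity of the substitution is secured, the coordinatewise vanishing of a mixed identity yields the contradiction at once.
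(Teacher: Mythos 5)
The paper does not actually prove this proposition (it cites Hull--Osin for it), so your attempt has to stand on its own; the cycle (3) $\Rightarrow$ (2) $\Rightarrow$ (1) $\Rightarrow$ (3) is the standard architecture, and (3) $\Rightarrow$ (2) as well as the countable case of (2) $\Rightarrow$ (1) are correct. The genuine gap is exactly at the step you yourself flag as the main obstacle: producing $y_1,y_2,\dots$ in $G \ast \mathbb{Z} = \langle G,t\rangle$ such that the natural map $\mathbb{F}_m \ast G \to G \ast \mathbb{Z}$, $t_i \mapsto y_i$ and the identity on $G$, is injective. Your witnesses fail on two counts. First, $t$ and $sts^{-1}$ ($s \in G$) are not hyperbolic on the Bass--Serre tree of the splitting $G \ast \langle t\rangle$: each fixes a vertex (namely $\langle t\rangle$, resp.\ its $s$-translate), so they have no axes at all. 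Second, and fatally, the group they generate is not free from $G$: together with $G$ they generate all of $G \ast \mathbb{Z}$, and the homomorphism $\mathbb{F}_2 \ast G \to G \ast \mathbb{Z}$ with $t_1 \mapsto t$, $t_2 \mapsto sts^{-1}$ kills the nontrivial element $t_2^{-1} s t_1 s^{-1}$, so it is not injective and your contradiction never materializes. The fact you need is true but requires different witnesses: fix $a \in G \setminus \{1\}$, recall that the kernel of the retraction $G \ast \mathbb{Z} \to \mathbb{Z}$ is the free product $\ast_{k \in \mathbb{Z}}\, t^k G t^{-k}$, and set for instance $y_n := \bigl(t^{2n-1} a t^{-(2n-1)}\bigr)\bigl(t^{2n} a t^{-2n}\bigr)$. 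These have infinite order even when $a$ is torsion (their powers are alternating words in two distinct conjugate factors), and a normal-form argument shows $\langle G, y_1, y_2, \dots\rangle = G \ast \langle y_1\rangle \ast \langle y_2\rangle \ast \cdots \cong G \ast \mathbb{F}_\infty$; with these $y_n$ your coordinatewise argument for (1) $\Rightarrow$ (3) goes through verbatim. (With the same embedding one can also prove (2) $\Rightarrow$ (3) directly: substituting $t_i \mapsto y_i(t)$ turns a nontrivial $w \in \mathbb{F}_m \ast G$ into a nontrivial $w' \in \mathbb{Z} \ast G$, and a witness $g \in G$ with $w'(g) \neq 1$ yields witnesses $y_i(g) \in G$ for $w$; this is essentially the cited Remark 5.1 of Hull--Osin.)

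There is a second, smaller but real, gap in your (2) $\Rightarrow$ (1) for uncountable $G$: replacing the index set $\mathbb{N}$ by $G$ changes the statement, since an asymptotically free sequence is by definition indexed by $\mathbb{N}$ (and the paper genuinely uses this later, passing to ultrafilters on $\mathbb{N}$). This is not merely cosmetic, because the implication really does require countability. If every countable subgroup of $G$ has a nontrivial centralizer, then no sequence $(g_n)_n$ can be asymptotically free: pick $c \neq 1$ centralizing $\langle g_n : n \in \mathbb{N}\rangle$; the nontrivial word $[c,t]$ then vanishes at every $g_n$. Suitably ($\aleph_1$-)existentially closed groups have this property while satisfying no nontrivial one-variable mixed identity, so for uncountable groups (1) and (2) are genuinely inequivalent. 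Hull--Osin state the result for countable groups; your write-up should either assume countability outright or restrict the equivalence with (1) to that case.
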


We denote the group von Neumann algebra of $G$ with its natural trace by $(LG,\tau)$. It is well-known that any non-trivial conjugacy class in a ${\rm MIF}$~group is infinite. In particular, the group von Neumann algebra $LG$ of a ${\rm MIF}$~group $G$ is a ${\rm II}_1$-factor. Note that the left-regular representation provides a natural inclusion $G \subset {\rm PU}(LG)$.
 We will be interested in the existence if maximal discrete subgroups of ${\rm PU}(LG)$ containing $G$.

\begin{lemma}
Let $(g_n)_n$ be an asymptotically free sequence. There exists an ultrafilter $\omega$, such that $[(g_n)_n] \in U((LG)^{\omega})$ is freely independent from the diagonal copy of $LG$ in $(LG)^\omega$. 
\end{lemma}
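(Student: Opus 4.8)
The plan is to check free independence of $v:=[(g_n)_n]$ from the diagonal copy of $LG$ through the moment characterisation of freeness, and to invest the real work in the choice of $\omega$. Write $\phi_n\colon\Z\ast G\to G$ for the homomorphism fixing $G$ and sending $t\mapsto g_n$, and set $A_w:=\{n:\phi_n(w)=1\}$ for $w\in\Z\ast G$. Since $\tau(s)=\delta_{s,1}$ on $LG$, the centred part of $LG$ is spanned by $\{s:s\in G\setminus\{1\}\}$; and I first record that $v$ is a Haar unitary, because $\tau(v^j)=\lim_{n\to\omega}\delta_{g_n^j,1}$ vanishes for $j\ne0$ as soon as $A_{t^j}\notin\omega$. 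Granting this, the centred part of $W^*(v)$ is spanned by $\{v^j:j\ne0\}$, and by traciality every alternating centred product can be cyclically rotated to the form $s_1v^{\varepsilon_1}\cdots s_kv^{\varepsilon_k}$ with $s_i\in G\setminus\{1\}$ and $\varepsilon_i\ne0$. Pushing the trace through the ultraproduct gives
$$\tau\!\left(s_1v^{\varepsilon_1}\cdots s_kv^{\varepsilon_k}\right)=\lim_{n\to\omega}\delta_{\,s_1g_n^{\varepsilon_1}\cdots s_kg_n^{\varepsilon_k},\,1},$$
which is $0$ exactly when $A_w\notin\omega$ for $w=s_1t^{\varepsilon_1}\cdots s_kt^{\varepsilon_k}$. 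This is precisely the word shape appearing in the definition of asymptotic freeness, which is what makes the reduction clean: free independence of $v$ from $LG$ is equivalent to $A_w^c\in\omega$ for every nontrivial reduced $w\in\Z\ast G$ (the Haar property being the instance $w=t^j$).

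So it remains to produce an ultrafilter $\omega$ with $A_w^c\in\omega$ simultaneously for all nontrivial $w$. I would show that the family $\{A_w^c\}_{w\ne1}$ has the finite intersection property and extend it to an ultrafilter by Zorn's lemma. Given nontrivial $w_1,\dots,w_m$, conjugation does not change the bad sets — $A_{xw_ix^{-1}}=A_{w_i}$, since $\phi_n(xw_ix^{-1})=1$ iff $\phi_n(w_i)=1$ — so I am free to replace $w_i$ by $t^{N_i}w_it^{-N_i}$. For increasing, sufficiently large $N_i$ these conjugates lie in Schottky position and freely generate a free subgroup of $\Z\ast G$; hence the iterated commutator $W=[w_1,[w_2,[\cdots,w_m]\cdots]]$ is nontrivial. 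Because a commutator with a trivial entry is trivial, $\phi_n(W)=1$ whenever $\phi_n(w_i)=1$ for some $i$, so $A_W\supseteq A_{w_1}\cup\cdots\cup A_{w_m}$. Asymptotic freeness applied to the single nontrivial word $W$ gives $A_W\ne\mathbb N$, whence $A_{w_1}^c\cap\cdots\cap A_{w_m}^c=(A_{w_1}\cup\cdots\cup A_{w_m})^c\ne\emptyset$, which is the finite intersection property.

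Finally, any ultrafilter $\omega$ refining this filter is automatically non-principal, which is what the statement needs (a principal $\omega$ would give $(LG)^\omega=LG$ and $v$ a single group element). Indeed, if $\omega$ were principal at $n_0$, then $A_w^c\in\omega$ for all nontrivial $w$ would force $n_0\in A_w^c$, i.e. $\phi_{n_0}(w)\ne1$ for every nontrivial $w$, i.e. $\phi_{n_0}\colon\Z\ast G\to G$ injective; but $\phi_{n_0}(t\,g_{n_0}^{-1})=1$ while $t\,g_{n_0}^{-1}\ne1$ in $\Z\ast G$, a contradiction. Combining the three steps produces an ultrafilter with $A_w^c\in\omega$ for all nontrivial $w$, which is exactly free independence of $[(g_n)_n]$ from the diagonal $LG$.

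The step I expect to be the main obstacle is the finite-intersection property, and within it the nontriviality of the combined word $W$. The point of the iterated-commutator-after-conjugation device is precisely to convert the per-word hypothesis of asymptotic freeness into a simultaneous statement about finitely many words, and the only substantive input is the ping-pong/normal-form argument guaranteeing that generic conjugates of $w_1,\dots,w_m$ freely generate a free subgroup, so that $W\ne1$. Everything else — the moment reduction, the Haar property, and the automatic non-principality of $\omega$ — is formal.
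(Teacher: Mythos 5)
Your proposal follows the same route as the paper's proof: reduce free independence of $v=[(g_n)_n]$ from the diagonal copy of $LG$ to the requirement that $\omega$ contain all the non-vanishing sets $A_w^c$, and obtain the finite intersection property of these sets by applying asymptotic freeness to a single combined word built from iterated commutators, using that a commutator with a trivial entry is trivial. Your moment reduction, the observation that conjugation preserves the sets $A_w$, and the non-principality remark are all fine, and you have correctly isolated the crux, namely that the combined word $W$ must be nontrivial in $\mathbb{Z}\ast G$ --- a point on which the paper's own proof is silent (it forms $v_\ell=[w_{1},[w_{2},\dots[w_{\ell-1},w_{\ell}]]]$ over an enumeration of \emph{all} elements of $\mathbb{Z}\ast G$ and simply asserts that asymptotic freeness produces $n_\ell$ with $v_\ell(g_{n_\ell})\neq 1$, which presupposes $v_\ell\neq 1$).

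However, the mechanism you propose for forcing $W\neq 1$ fails exactly on words you cannot avoid. Conjugation by powers of $t$ fixes every power of $t$, so for the family $w_1=t$, $w_2=t^2$ --- both needed, since your Haar property is the family $\{t^j\}_{j\neq 0}$ --- the ``conjugates'' are again $t$ and $t^2$: they commute for every choice of $N_1,N_2$, no Schottky position is possible, and $W=[t,t^2]=1$. Thus your finite intersection argument cannot even produce one $n$ with $g_n^2\neq 1$; note also that pure powers of $t$ are not instances of the defining property of asymptotic freeness (which only quantifies over words with nontrivial $G$-letters interspersed), so they genuinely need a workaround rather than a direct citation. A second, smaller, overstatement: even for words not commuting with $t$, the conjugates need not generate a \emph{free} group --- if some $w_i$ is a torsion element of $G$ one gets at best a free product of finite cyclic groups; this part is harmless since all you use is $W\neq 1$. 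One way to close the real gap within your scheme: at each step of the iterated commutator, choose an arbitrary conjugator $x\in\mathbb{Z}\ast G$ (not necessarily a power of $t$) such that the two entries do not commute; such $x$ exists because $\mathbb{Z}\ast G$ has small centralizers and no nontrivial abelian normal subgroup, so the conjugates of a nontrivial element cannot all centralize another nontrivial element. Conjugation by any $x$ still preserves the sets $A_w$, each successive commutator is then nontrivial by construction, and since a nontrivial commutator dies in the abelianization $\mathbb{Z}\times G^{\mathrm{ab}}$ it can never be a nonzero power of $t$: hence the final word is either conjugate into $G$ (in which case every $n$ works) or conjugate to a word of the alternating form to which asymptotic freeness applies. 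With that replacement your argument, and incidentally also the paper's, becomes complete.
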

This lemma is standard but not completely trivial, and we include the argument for the sake of completeness.
\begin{proof}
Let $(w_m)$ be an enumeration of all mixed identities, that is, elements in $\mathbb Z\ast G$. We denote by $w(m_1,\dots,m_\ell)$ the iterated commutator
\[
w(m_1,\dots,m_\ell) = [w_{m_1},[w_{m_2},\dots [w_{m_{\ell-1}},w_{m_\ell}]]].
\]
Observe that if $w(m_1,\dots,m_\ell)(g) \neq 1$, it guarantees that $w_{m_i}(g)\neq 1$ for all $i=1,\dots,\ell$. Let $v_\ell\coloneqq w(1,2,\dots,\ell)$. Asymptotic freeness of $(g_n)$ guarantees the existence of natural numbers $n_\ell$ such that $v_\ell(g_{n_\ell})\neq 1$. Taking an ultrafilter containing this sequence finishes the proof.
\end{proof}

\begin{lemma}
Let $G$ be a ${\rm MIF}$~group and $u \in {\rm U}(LG)$ with $u \not \in S^1 \cdot 1$ and $\Re(\tau(u))>3/4$. For any $\varepsilon>0$ there exists some $w \in \mathbb F_2$ and $g \in G$ such that $\ell(w(u,g))< \varepsilon$. Moreover, the image of the map from $\mathbb Z \ast G$ to ${\rm PU}(LG)$ that maps the generator $1 \in \mathbb Z$ to $u$ is not discrete.
\end{lemma}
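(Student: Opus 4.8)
The plan is to transport the computations of Section~2 into an ultrapower of $LG$, where a free Haar unitary becomes available, and then descend back to $G$ along the ultralimit. First I would record the role of the trace hypothesis: since $\ell(u)^2 = 2 - 2\Re(\tau(u))$, the assumption $\Re(\tau(u)) > 3/4$ is exactly equivalent to $\ell(u) < 1/\sqrt{2}$, the standing hypothesis of the Corollary; moreover $u \notin S^1 \cdot 1$ means $\bar\ell(u) > 0$. Because $G$ is ${\rm MIF}$, the Proposition characterizing ${\rm MIF}$ groups provides an asymptotically free sequence $(g_n)_n$ in $G$, and the previous Lemma yields an ultrafilter $\omega$ for which $v := [(g_n)_n] \in {\rm U}((LG)^\omega)$ is freely independent from the diagonal copy of $LG$; in particular $u$ and $v$ are freely independent in the tracial von Neumann algebra $(LG)^\omega$.

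The one point requiring care is that the Corollary needs $v$ to be a \emph{Haar} unitary, which freeness alone does not force. I would extract it from the same construction: for each $k \in \mathbb{Z}\setminus\{0\}$ the word $t^k \in \mathbb{Z} \ast G$ is not a mixed identity (otherwise $G$ would have exponent dividing $k$), hence it occurs in the enumeration of $\mathbb{Z}\ast G$ used in the proof of the previous Lemma, and the iterated-commutator selection forces $g_n^k \neq 1$ for $\omega$-many $n$. Since $\tau(g_n^k)\in\{0,1\}$ equals $1$ only when $g_n^k=1$, we get $\tau(v^k) = \lim_\omega \tau(g_n^k) = 0$ for every $k\neq 0$, so $v$ is a Haar unitary. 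Equivalently, one refines the ultrafilter of the previous Lemma to contain each set $\{n : g_n^k \neq 1\}$.

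With $u,v$ freely independent, $v$ Haar and $\ell(u)<1/\sqrt2$ in hand, I would use the explicit sequence $w_N \in \mathbb{F}_2$ fixed in Section~2 together with the two-sided estimate derived there,
\[
(1/\sqrt{2})^{N-1}\,\bar\ell(u)^N \;\leq\; \bar\ell(w_N(u,v)) = \ell(w_N(u,v)) \;\leq\; (\sqrt{2})^{N-1}\,\ell(u)^N,
\]
where for $N\geq 2$ the middle equality $\bar\ell = \ell$ comes from the Theorem, since $w_N(u,v)$ is a commutator of two freely independent unitaries (the conjugates $v^{j}uv^{-j}$ being freely independent as $v$ is Haar). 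The key observation is that coordinatewise evaluation gives $w_N(u,v) = [(w_N(u,g_n))_n]$, so both $\ell$ and $\bar\ell$ of $w_N(u,v)$ are the $\omega$-limits of $\ell(w_N(u,g_n))$ and $\bar\ell(w_N(u,g_n))$. For the first assertion, given $\varepsilon > 0$ I choose $N$ with $(\sqrt{2})^{N-1}\ell(u)^N < \varepsilon$ (possible as $\sqrt{2}\,\ell(u) < 1$); then $\lim_\omega \ell(w_N(u,g_n)) < \varepsilon$ selects some $n$ with $\ell(w_N(u,g_n)) < \varepsilon$, and $w := w_N$, $g := g_n$ work.

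For the ``moreover'' part I would show that the image $\Gamma \subset {\rm PU}(LG)$ of $\mathbb{Z}\ast G$ has the identity as a non-isolated point. Fix $\delta > 0$ and choose $N$ as above with upper bound $< \delta$. The lower bound gives $\bar\ell(w_N(u,v)) \geq (1/\sqrt{2})^{N-1}\bar\ell(u)^N > 0$ because $\bar\ell(u) > 0$, so $\lim_\omega \bar\ell(w_N(u,g_n))$ is a positive number strictly below $\delta$. Hence for $\omega$-many $n$ one has $0 < \bar\ell(w_N(u,g_n)) < \delta$; any such $n$ produces $w_N(u,g_n) \in \Gamma$ which is non-trivial in ${\rm PU}(LG)$ (as $\bar\ell > 0$ means it is not a scalar) yet $\bar\ell$-close to the identity, and since $\delta$ was arbitrary $\Gamma$ is not discrete. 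I expect the main obstacle to be precisely the Haar property of $v$ and, relatedly, the passage of the \emph{lower} bound through the ultralimit in the projective metric $\bar\ell$: it is the identity $\bar\ell = \ell$ for commutators of freely independent unitaries, supplied by the Theorem, that guarantees the small elements we produce are genuinely non-trivial in ${\rm PU}(LG)$ rather than scalars.
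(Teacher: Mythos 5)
Your proof is correct and takes essentially the same route as the paper's: pass to the ultrapower $(LG)^\omega$ where the asymptotically free sequence yields a unitary $v$ free from the diagonal copy of $LG$, apply the Section~2 estimates to the words $w_N(u,v)$, and descend to a coordinate $g_n$ along the ultrafilter, using the upper bound (with $\sqrt{2}\,\ell(u)<1$) for smallness and the lower bound (with $\bar\ell(u)>0$) for non-triviality in ${\rm PU}(LG)$. The only difference is that you make explicit two points the paper's terse proof leaves implicit --- the Haar property of $v$ (extracted from the powers $t^k$ in the enumeration of $\mathbb{Z}\ast G$) and the fact that the lower bound survives the ultralimit in the projective metric $\bar\ell$ via the equality $\bar\ell=\ell$ for commutators of free unitaries --- and both are handled correctly.
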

\begin{proof}
Note that from the assumptions $\bar \ell(u) \neq 0$ and $\ell(u)< \sqrt{2}$.
Let $(g_k)_k$ be an asymptotically free sequence and set $v= [(g_k)_k] \in (LG)^{\omega}$. By assumption $\ell(u) \leq 1/\sqrt{2} - \delta$ for some $\delta>0$. By the results from above, we get
$$(1/\sqrt{2})^{n-1} \cdot \bar\ell(u)^n \leq \ell(w_n(u,v)) \leq(\sqrt{2})^{n-1} \cdot \ell(u)^n.$$
Thus, for some $k \in \mathbb N$
$$(1/\sqrt{2})^{n} \cdot \bar\ell(u)^n \leq \ell(w_n(u,g_k)) \leq (\sqrt{2})^{n} \cdot \ell(u)^n$$
and this finishes the proof.
\end{proof}

The following consequence is immediate.
\begin{corollary}
Let $G$ be a ${\rm MIF}$~group and let $u \in {\rm PU}(LG)$ with $0 < \bar \ell(u) < 1/\sqrt{2}$. Then, the subgroup $\langle u, G \rangle \subset {\rm PU}(LG)$ is not discrete.
In particular, the set of discrete subgroups of ${\rm PU}(LG)$ containing $G$ is uniformly discrete.
\end{corollary}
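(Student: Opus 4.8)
The plan is to deduce both assertions from the previous Lemma by passing to a suitable unitary lift of $u$ and translating the hypothesis on $\bar\ell$ into the trace condition $\Re(\tau(\cdot))>3/4$ appearing there.

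First I would recall that $\bar\ell(u) = \sqrt{2(1-|\tau(u)|)}$, so that the assumption $0 < \bar\ell(u) < 1/\sqrt{2}$ is exactly equivalent to $3/4 < |\tau(u)| < 1$. In particular $\tau(u) \neq 0$, so I may choose any unitary lift $\hat u \in {\rm U}(LG)$ of $u$ and replace it by $\bigl(\overline{\tau(\hat u)}/|\tau(\hat u)|\bigr)\hat u$ in order to arrange $\tau(\hat u) = |\tau(u)| \in (3/4,1)$; multiplying by a phase does not change the class of $u$ in ${\rm PU}(LG)$. Then $\Re(\tau(\hat u)) > 3/4$, and $\hat u \notin S^1 \cdot 1$ because $|\tau(u)| < 1$. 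Applying the previous Lemma to $\hat u$ shows that the image in ${\rm PU}(LG)$ of the map $\mathbb Z \ast G \to {\rm PU}(LG)$ sending the generator $1\in\mathbb Z$ to $\hat u$ is not discrete; since this image is precisely $\langle u, G\rangle$, the first assertion follows.

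For the uniform discreteness statement I would argue by contraposition. Suppose $H \subset {\rm PU}(LG)$ is a discrete subgroup containing $G$, and let $h \in H \setminus \{1\}$, so that $\bar\ell(h) > 0$. If we had $\bar\ell(h) < 1/\sqrt{2}$, then by the first part $\langle h, G\rangle$ would fail to be discrete; but $\langle h, G\rangle \subset H$, and every subgroup of a discrete group is discrete, a contradiction. Hence $\bar\ell(h) \geq 1/\sqrt{2}$ for every nontrivial $h \in H$, and this bound is independent of $H$, which is exactly the asserted uniform discreteness.

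There is no genuine obstacle here beyond bookkeeping, as the content is entirely contained in the previous Lemma. The only points requiring care are the phase normalization of the lift $\hat u$ (so that its trace becomes real and exceeds $3/4$, which is where $\bar\ell(u)<1/\sqrt{2}$ and $\bar\ell(u)>0$ are used, via $|\tau(u)|<1 \Leftrightarrow u \notin S^1\cdot 1$) and the identification of $\langle u, G\rangle$ with the image of $\mathbb Z \ast G$, together with the elementary remark that discreteness passes to subgroups.
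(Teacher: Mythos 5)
Your proposal is correct and follows essentially the same route as the paper: the paper's proof also reduces everything to the previous Lemma by passing to a suitable unitary lift $u'$ of $u$ with $\ell(u') = \bar\ell(u) < 1/\sqrt{2}$ (equivalently $\Re(\tau(u')) > 3/4$), which is exactly your phase normalization. You merely spell out the details the paper leaves implicit, namely the equivalence $0 < \bar\ell(u) < 1/\sqrt{2} \Leftrightarrow 3/4 < |\tau(u)| < 1$, the identification of the image of $\mathbb Z \ast G$ with $\langle u, G\rangle$, and the contrapositive argument giving the uniform bound $\bar\ell(h) \geq 1/\sqrt{2}$.
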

\begin{proof} By the previous lemma, no such subgroup contains a non-trivial element with $\bar \ell(u)<1/\sqrt{2}$. Indeed, we would apply the previous lemma to a suitable lift $u'$ with $\ell(u')< 1/\sqrt{2}$.\end{proof}

We are now able to state and prove our main result.
\begin{theorem} \label{main}
Let $G$ be a ${\rm MIF}$~group. Then, $G$ is contained in a maximal discrete subgroup in ${\rm PU}(LG)$. 
\end{theorem}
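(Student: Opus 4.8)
The plan is to deduce the theorem formally from the uniform discreteness established in the preceding corollary, via a straightforward application of Zorn's lemma. First I would introduce the poset $\mathcal P$ of all discrete subgroups $H \subseteq {\rm PU}(LG)$ with $G \subseteq H$, ordered by inclusion. This poset is nonempty: the image of $G$ under the left-regular representation is discrete, since for every non-trivial $g \in G$ one has $\tau(g) = 0$ and hence $\bar\ell(g) = \sqrt{2}$, so the identity is isolated in $G$ and $G \in \mathcal P$. The decisive input is the uniform lower bound furnished by the corollary: since $LG$ is a ${\rm II}_1$-factor for a ${\rm MIF}$~group, any discrete subgroup $H \in \mathcal P$ can contain no non-trivial element $u$ with $0 < \bar\ell(u) < 1/\sqrt{2}$, because $\langle u, G\rangle \subseteq H$ would then fail to be discrete. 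Equivalently, every non-trivial $u \in H$ satisfies $\bar\ell(u) \geq 1/\sqrt{2}$, with a constant that does not depend on $H$. This uniformity is exactly what makes the chain condition work.

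Next I would check that every chain $(H_i)_{i \in I}$ in $\mathcal P$ admits an upper bound. Set $H = \bigcup_i H_i$. Because the $H_i$ are totally ordered by inclusion, $H$ is again a subgroup of ${\rm PU}(LG)$, and it plainly contains $G$. Any non-trivial $u \in H$ lies in some $H_i$, so by the uniform bound $\bar\ell(u) \geq 1/\sqrt{2}$. Consequently the ball $\{u \in {\rm PU}(LG) : \bar\ell(u) < 1/\sqrt{2}\}$ meets $H$ only in the identity, the identity is isolated in $H$, and $H$ is therefore discrete. Thus $H \in \mathcal P$ and is an upper bound for the chain.

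By Zorn's lemma, $\mathcal P$ then contains a maximal element $H_{\max}$. Finally I would observe that $H_{\max}$ is not merely maximal among discrete subgroups containing $G$, but is a maximal discrete subgroup of ${\rm PU}(LG)$ in the absolute sense: any discrete subgroup properly containing $H_{\max}$ would in particular contain $G$, hence lie in $\mathcal P$, contradicting the maximality of $H_{\max}$. This yields $G \subseteq H_{\max}$ with $H_{\max}$ a maximal discrete subgroup, as desired. I do not expect any genuine obstacle at this stage: all the substantive content sits in the uniform discreteness, which is already supplied by the corollary, and the remainder is a purely formal Zorn's lemma argument whose only delicate point is verifying that the chain union is discrete — and this is immediate precisely because the lower bound $1/\sqrt{2}$ is uniform across the whole family.
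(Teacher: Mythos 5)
Your proposal is correct and takes essentially the same approach as the paper: the paper's proof is exactly this Zorn's lemma argument on the poset of discrete subgroups containing $G$, with the uniform discreteness from the preceding corollary guaranteeing that unions over chains remain discrete. You have merely spelled out the details the paper leaves implicit (discreteness of $G$ itself via $\bar\ell(g)=\sqrt{2}$ for $g \neq 1$, the verification for chain unions, and the passage from maximality in the poset to absolute maximality among discrete subgroups).
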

\begin{proof}
The discrete subgroups of ${\rm PU}(LG)$ containing $G$ are ordered by inclusion.   Now, the result follows by Zorn's lemma since unions over chains remain discrete.
\end{proof}

\begin{corollary}
Let $R$ be the hyperfinite ${\rm II}_1$-factor. The group ${\rm PU}(R)$ contains a maximal discrete subgroup.
\end{corollary}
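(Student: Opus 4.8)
The plan is to deduce the corollary from Theorem \ref{main} by realizing $R$ as $LG$ for a suitable group $G$. Since $LG \cong R$ for \emph{every} countable amenable ICC group $G$ (by Connes' uniqueness of the hyperfinite ${\rm II}_1$-factor), it suffices to exhibit one countable amenable ICC group that is also mixed identity free; Theorem \ref{main} then places $G$ inside a maximal discrete subgroup of ${\rm PU}(LG) = {\rm PU}(R)$. The candidate I would use is the finitary symmetric group $G = S_\infty = {\rm FSym}(\N)$ of finitely supported permutations of $\N$ (or its finitary alternating subgroup).

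First I would check the soft properties of $S_\infty$. As the increasing union of the finite groups ${\rm Sym}(\{1,\dots,n\})$ it is locally finite, hence amenable; it is countable; and it is ICC, since a non-trivial finitely supported permutation can be conjugated to have arbitrarily large disjoint supports and thus has infinitely many distinct conjugates. Therefore $LG \cong R$, and $G$ sits inside ${\rm PU}(R)$ as a uniformly discrete subgroup: every non-trivial $g \in G$ satisfies $\tau(g) = 0$, so $\bar\ell(g) = \sqrt 2$.

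The one substantial point is that $S_\infty$ is mixed identity free. By the Proposition recalled above, this is equivalent to $S_\infty$ having no non-trivial mixed identity in $\Z \ast S_\infty$, i.e. to the statement that every non-trivial reduced word
\[
w = s_0\, t^{a_1} s_1\, t^{a_2} \cdots t^{a_m} s_m \in \Z \ast S_\infty, \qquad a_i \neq 0,\ s_i \neq 1\ (0<i<m),
\]
with $m \ge 1$ satisfies $w(g) \neq 1$ for some $g \in S_\infty$ (the case $m=0$ being trivial). I would prove this by a ping-pong argument exploiting that $S_\infty$ acts highly transitively and contains cycles of every finite length, building $g$ after $w$ is fixed. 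Writing $F = \bigcup_i {\rm supp}(s_i)$ for the finite union of the supports of the coefficients, I would take $g$ to be a long cycle and a fresh base point $x_0 \notin F$, and route the orbit of $x_0$ so that the intermediate points produced by the successive syllables lie outside $F$, where the coefficients act trivially; then $w(g)(x_0) = g^{a_1 + \dots + a_m}(x_0)$, which differs from $x_0$ as soon as $\sum_i a_i \neq 0$ and the cycle length exceeds $|\sum_i a_i|$. When $\sum_i a_i = 0$ the word is non-trivial only on account of its coefficients, and I would instead route the orbit through ${\rm supp}(s_i)$ for one inner syllable with $s_i \neq 1$, so that $s_i$ displaces the trajectory and prevents it from closing up. The hard part will be this combinatorial routing — realizing a prescribed trajectory of a test point that meets or avoids the finite set $F$ exactly as required — but it is entirely elementary and could also be cited from the literature relating high transitivity to the absence of mixed identities.

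With $S_\infty$ identified as a countable amenable ICC mixed-identity-free group, Theorem \ref{main} applies directly and yields a maximal discrete subgroup of ${\rm PU}(LG) = {\rm PU}(R)$ containing $S_\infty$; in particular ${\rm PU}(R)$ contains a maximal discrete subgroup. I would also mention the alternative of arguing straight from the separable-factor corollary applied to $M = R$, using its Haar unitaries $v_n$ to force uniform discreteness, but the obstacle there is manufacturing an explicit discrete subgroup containing all the $v_n$ to serve as the anchor for Zorn's lemma — a difficulty the group-theoretic route sidesteps.
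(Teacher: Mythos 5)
Your overall strategy is exactly the paper's: realize $R$ as $LG$ for a countable amenable i.c.c.\ group $G$ that is mixed identity free, then apply Theorem \ref{main} and Connes' theorem. The fatal gap is your choice of $G$: the finitary symmetric group ${\rm FSym}(\mathbb N)$ (and likewise its finitary alternating subgroup) is \emph{not} mixed identity free. Concretely, let $s=(1\,2\,3)$ and consider the word
\[
w(t) \;=\; \bigl[\,s,\; t s t^{-1}\,\bigr]^{60} \;\in\; \mathbb Z \ast {\rm FSym}(\mathbb N).
\]
For every $g\in {\rm FSym}(\mathbb N)$, the commutator $[s, gsg^{-1}]$ is supported on ${\rm supp}(s)\cup g({\rm supp}(s))$, a set of at most $6$ points, so its order divides ${\rm lcm}(1,2,\dots,6)=60$; hence $w(g)=1$ for all $g$, while $w$ is a non-trivial reduced word in the free product (its syllables $s, t, s, t^{-1}, s^{-1}, t, s^{-1}, t^{-1}$ never cancel, nor do they at the junctions of the $60$th power). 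So ${\rm FSym}(\mathbb N)$ satisfies a non-trivial mixed identity, and Theorem \ref{main} does not apply to it. This also pinpoints where your ``routing'' argument breaks: in your exponent-sum-zero case, the triviality of $w(g)$ is forced by the bounded supports of the conjugated coefficients, uniformly in $g$, so no choice of long cycle and no trajectory of a base point can ever witness $w(g)\neq 1$. This is not a hard combinatorial step you left open; it is false. Indeed, in the Hull--Osin dichotomy a group with a faithful highly transitive action is either MIF or contains a normal subgroup isomorphic to the finitary alternating group, and your $G$ is the prototypical example of the second alternative.

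The repair is what the paper does: instead of exhibiting a concrete permutation group, quote the existence results — Hull and Osin \cite{MR3556970} construct amenable MIF groups, and Jacobson \cite{MR4193626} even gives an elementary amenable one. Since every MIF group is automatically i.c.c., Connes' theorem \cite{MR454659} gives $LG=R$ for any such countable group, and Theorem \ref{main} then produces a maximal discrete subgroup of ${\rm PU}(R)$ containing $G$. Your soft observations (local finiteness $\Rightarrow$ amenability, i.c.c., $\bar\ell(g)=\sqrt 2$ for non-trivial group elements) are fine but cannot compensate for the failure of the MIF hypothesis, which is the one property Theorem \ref{main} genuinely needs.
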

\begin{proof}
It is known by results of Hull and Osin \cite{MR3556970} that there are amenable ${\rm MIF}$~groups. Any ${\rm MIF}$~group is automatically an i.c.c. group (that is, it has only infinite non-trivial conjugacy classes). See also \cite{MR4193626} for an example of an elementary amenable group that is ${\rm MIF}$. Since $LG=R$ in the i.c.c.\ amenable case by Connes \cite{MR454659}, this finishes the proof.
\end{proof}

\begin{question}
When is $G \subset {\rm PU}(LG)$ a maximal discrete subgroup?
\end{question}

Due to existence of central subgroups, the preceding question is only interesting for i.c.c.\ groups. However, we do not have a single i.c.c.\ group, where we could decide this question.

\section{Miscellaneous results}

In this section, we start out by briefly discussing maximal discrete subgroups in unitary groups of $C^*$-algebras. The following result is in analogy to the Jordan-Schur theorem mentioned in the introduction.

\begin{theorem}\label{cstar}
Let $A$ be a unital $C^*$-algebra and $\Gamma$ be a discrete subgroup of its unitary group ${\rm U}(A)$. Then, the subgroup
$$\Gamma_{1/2} = \langle g \in \Gamma \mid \|1-g\|<1/2 \rangle$$
is abelian and normal in $\Gamma$.
\end{theorem}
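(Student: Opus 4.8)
The plan is to run a Zassenhaus-type argument from the submultiplicative commutator estimate and then add one spectral ingredient that upgrades the resulting nilpotency to commutativity. For unitaries $a,b$ one has $aba^*b^*-1=(ab-ba)a^*b^*$, so $\|1-[a,b]\|=\|ab-ba\|=\|(a-1)(b-1)-(b-1)(a-1)\|\le 2\|1-a\|\,\|1-b\|$; writing $S=\{g\in\Gamma:\|1-g\|<1/2\}$, the point of the threshold $1/2$ is that $\|1-b\|<1/2$ forces $\|1-[a,b]\|<\|1-a\|$, so commutators against short elements strictly contract, while discreteness of $\Gamma$ yields $m>0$ with $\|1-g\|\ge m$ for all $g\in\Gamma\setminus\{1\}$. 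Fixing $u,v\in S$, I would first show $H=\langle u,v\rangle$ is nilpotent: every left-normed basic commutator $[x_1,\dots,x_k]$ in $x_i\in\{u,v\}$ satisfies $\|1-[x_1,\dots,x_k]\|\le\tfrac12(2r)^k$ with $r=\max(\|1-u\|,\|1-v\|)<1/2$, which drops below $m$ for large $k$ and hence vanishes, so $\gamma_N(H)=1$ for some $N$.

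The heart of the matter is a lemma ruling out nontrivial central commutators: if $\|1-b\|<1/2$ and $c=[a,b]$ commutes with both $a$ and $b$, then $c=1$. To prove it, note $aba^*=cb$ and, using centrality, $a^nba^{*n}=c^nb$ for all $n$; since conjugation preserves $\|1-\cdot\|$, this gives $\|1-c^nb\|=\|1-b\|$, whence $\|1-c^n\|\le\|1-c^nb\|+\|1-b\|=2\|1-b\|<1$ for every $n$. If $c\ne1$, choosing $\lambda\ne1$ in ${\rm spec}(c)$ makes $\sup_n|1-\lambda^n|>1$ (the powers $\lambda^n$ are dense in the circle or exhaust a nontrivial finite cyclic group), contradicting $\|1-c^n\|=\sup_{\mu\in{\rm spec}(c)}|1-\mu^n|\le 2\|1-b\|<1$.

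Combining these, if $H$ were nonabelian its nilpotency class $d\ge2$ would make $\gamma_d(H)$ central and nontrivial; as $\gamma_d(H)$ is generated by weight-$d$ basic commutators $[y,x]$ with $y\in\gamma_{d-1}(H)$ and $x\in\{u,v\}$, some $z=[y,x]\ne1$ is central in $H$ and therefore commutes with $y$ and $x$. Since $\|1-x\|<1/2$, the lemma forces $z=1$, a contradiction; thus every pair in $S$ commutes and $\Gamma_{1/2}=\langle S\rangle$ is abelian. Normality is immediate, since conjugation by any $\gamma\in\Gamma$ preserves $\|1-\cdot\|$ and hence permutes $S$ and fixes $\langle S\rangle$.

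I expect the decisive difficulty to be precisely the passage from nilpotency to commutativity. The contraction estimate is a purely Zassenhaus input and on its own yields only nilpotency: the naive descent through iterated commutators does not close up to abelianness, and a minimal-length argument is obstructed by possible non-attainment of the relevant infimum, since norm-balls in ${\rm U}(A)$ need not be compact. The new, non-metric ingredient is the spectral lemma, and its crucial feature is the asymmetry that only one entry need be short, which is exactly what allows it to be applied to a top-weight commutator $[y,x]$ whose first entry $y$ may be far from $1$.
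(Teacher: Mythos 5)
Your proof is correct, but it takes a genuinely different route from the paper's. Both arguments start from the same contraction estimate $\|1-[a,b]\|\le 2\|1-a\|\,\|1-b\|$, and both ultimately reduce to ruling out a non-trivial \emph{central} commutator, i.e.\ a unitary representation of the discrete Heisenberg group in which $z$ acts non-trivially; but they reach that configuration and dispose of it differently. The paper never proves nilpotency: for fixed $\varepsilon>0$ it sets $\delta$ equal to the infimum of $\|1-g\|$ over elements non-central in $\Gamma_{1/2-\varepsilon}$ (positive by discreteness), picks a non-central $g$ with $\|1-g\|<(1+2\varepsilon)\delta$ and a generator $h$ not commuting with $g$, and notes that $\|1-[g,h]\|\le 2(1+2\varepsilon)\delta(1/2-\varepsilon)<\delta$ forces $[g,h]$ to be central and non-trivial; it then quotes the representation theory of the Heisenberg group (irreducible representations factor through noncommutative tori $A_\theta$ or are finite-dimensional) to conclude that non-commuting generators satisfy $\|1-g\|,\|1-h\|\ge\sqrt3$, a contradiction. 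You instead establish honest nilpotency of each $\langle u,v\rangle$ and kill the top term $\gamma_d$ of the lower central series with your spectral lemma ($a^nba^{*n}=c^nb$, and powers of a unitary $c\ne 1$ must leave the ball of radius $1$ around $1$). That lemma is an elementary, self-contained replacement for the paper's appeal to Heisenberg/$A_\theta$ representation theory: it could be spliced into the paper's proof verbatim, giving the quantitative statement that a non-trivial central commutator $[a,b]$ forces $\|1-b\|\ge\sqrt3/2>1/2$, with no classification needed. Conversely, the paper's infimum trick is what lets it bypass your lower-central-series bookkeeping: in your write-up, the inference ``all left-normed generator-commutators of weight $N$ vanish, hence $\gamma_N(H)=1$'' is not a tautology and requires the standard group-theoretic lemma underlying Zassenhaus's theorem (if every left-normed commutator of weight $n$ in the generators is trivial, then each weight-$(n-1)$ generator-commutator commutes with every generator and is therefore central, so one passes to $H/Z(H)$ and inducts on $n$). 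That lemma is true and classical, so this is a presentational rather than mathematical gap, but it should be stated explicitly; the remaining steps --- the discreteness gap $m>0$, the identity $\|1-c^n\|=\sup_{\mu\in\mathrm{spec}(c)}|1-\mu^n|$, and the normality argument --- are correct as written.
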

\begin{proof}
We set $\ell(g)=\|1-g\|$ and note that
\begin{eqnarray*} \ell([g,h]) &=& \|1-ghg^*h^*\| = \|hg-gh\| \\
&=& \|(1-h)(1-g)-(1-g)(1-h)\| \leq 2 \|1-g\|\|1-h\| = 2 \ell(g) \ell(h).
\end{eqnarray*}
We define $\Gamma_{t} := \langle g \in \Gamma \mid \ell(g)<t \rangle$. 
%It follows from the standard commutator identities and the inequality above that $[\Gamma_t,\Gamma_{t'}] \subset \Gamma_{4tt'}.$ In particular, $\Gamma_{1/5}$ is nilpotent, since $n$-fold iterated commutators will be contained in $\Gamma_{(4/5)^n}$ which is eventually trivial. We will now show that $\Gamma_{1/5}$ is abelian. Consider 
Fix $\varepsilon>0$ and set
$$\delta = \inf\{\ell(g) \mid g \in \Gamma_{1/2-\varepsilon} \mbox{ is not central in } \Gamma_{1/2-\varepsilon}\}.$$
Suppose that this infimum is positive and finite, i.e.\ that the set of non-central elements is bounded away from $1$ and non-empty. If $g$ is non-central in $\Gamma_{1/2-\varepsilon}$, then it cannot commute with the entire generating set of $\Gamma_{1/2-\varepsilon}$. Thus, we can consider $g,h \in \Gamma_{1/2-\varepsilon}$ with $\ell(g) < (1+2\varepsilon)\delta$, $\ell(h)<1/2 - \varepsilon$ and $[g,h] \neq 1$. We compute $$\ell([g,h]) \leq 2 (1+2\varepsilon)\delta (1/2-\varepsilon)<\delta$$ and hence $[g,h]$ is central in $\Gamma_{1/2-\varepsilon}$, in particular, it commutes with $g$ and $h$. Thus, we obtain a unitary representation of the Heisenberg group
$$H(\mathbb Z) = \langle x,y,z \mid [x,y]=z, [x,z]=[y,z]=1 \rangle.$$ But for any unitary representation of the Heisenberg group for which $z$ acts non-trivially, both generators are far away from the identity. Indeed, irreducible unitary representations are parametrised by $\theta \in S^1$ and either lead to the unique irreducible representation of the non-commutative torus $A_{\theta}$ or they are finite-dimensional and completely understood. In any case, the generators satisfy $\|1-g\| \geq \sqrt{3}$ and $\|1-h\| \geq \sqrt{3}$ unless they commute. This implies that $\Gamma_{1/2-\varepsilon}$ is abelian. Since $\varepsilon>0$ was arbitrary and $$\Gamma_{1/2} = \bigcup_{\varepsilon>0} \Gamma_{1/2-\varepsilon},$$
this finishes the proof.
\end{proof}

The following corollary is immediate.

\begin{corollary}
Let $A$ be a unital $C^*$-algebra and $\Gamma \subset {\rm U}(A)$ be a discrete subgroup. Then, at least one of two conditions are satisfied:
\begin{enumerate}
    \item The set of discrete subgroups containing $\Gamma$ is uniformly discrete. In particular, the group $\Gamma$ is contained in a maximal discrete subgroup.
    \item The group $\Gamma$ normalizes a non-trivial abelian subalgebra of $A$, and there exists an ascending chain of subgroups of $U(A)$ containing $\Gamma$, which is not uniformly discrete.
\end{enumerate}
\end{corollary}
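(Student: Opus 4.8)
The plan is to read off the dichotomy directly from Theorem \ref{cstar}, applied not only to $\Gamma$ but to every discrete subgroup that contains it. Write $\ell(g) = \|1-g\|$ and recall that, for any discrete $\Lambda \subseteq {\rm U}(A)$, the group $\Lambda_{1/2} = \langle g \in \Lambda \mid \ell(g) < 1/2\rangle$ is abelian and normal in $\Lambda$. The quantity governing which alternative holds is the uniform gap
$$\varepsilon_0 := \inf \{ \ell(g) \mid \Lambda \supseteq \Gamma \text{ discrete}, \ g \in \Lambda \setminus \{1\}\}.$$
First I would dispose of the easy branch: if $\varepsilon_0 > 0$, then every discrete subgroup containing $\Gamma$ has all of its nontrivial elements at distance at least $\varepsilon_0$ from $1$, which is precisely uniform discreteness. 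The union of a chain of such subgroups again has this gap and is therefore discrete, so Zorn's lemma produces a maximal discrete subgroup containing $\Gamma$; this yields alternative (1).

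Suppose instead that $\varepsilon_0 = 0$. Then I would choose discrete subgroups $\Lambda_n \supseteq \Gamma$ and nontrivial $h_n \in \Lambda_n$ with $\ell(h_n) \to 0$. For $n$ large enough $\ell(h_n) < 1/2$, so $h_n$ lies in $(\Lambda_n)_{1/2}$, which by Theorem \ref{cstar} is abelian and normal in $\Lambda_n$; since $\Gamma \subseteq \Lambda_n$, the group $\Gamma$ normalizes $(\Lambda_n)_{1/2}$, and hence normalizes the commutative $C^*$-subalgebra $B := C^*((\Lambda_n)_{1/2})$ that it generates. Fixing one such $n$ for which $B \neq \mathbb{C} 1$ gives a nontrivial abelian subalgebra normalized by $\Gamma$. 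To produce the ascending chain witnessing non-uniform discreteness, I would pick a self-adjoint $h \in B$ that is not a scalar and set $u_t = \exp(ith) \in {\rm U}(B)$; choosing $t_k \searrow 0$ with $u_{t_k} \neq 1$ and putting $\Lambda'_k := \langle \Gamma, u_{t_1}, \dots, u_{t_k}\rangle$ gives an ascending chain of subgroups of ${\rm U}(A)$ containing $\Gamma$ with $\ell(u_{t_k}) \to 0$, so the chain fails to be uniformly discrete. This yields alternative (2).

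The step I expect to be the main obstacle is guaranteeing that $B$ is genuinely nontrivial, i.e.\ that $B \neq \mathbb{C} 1$. The small elements $h_n$ extracted from a failure of uniform discreteness could a priori all be central scalars $\lambda \cdot 1$, in which case $C^*((\Lambda_n)_{1/2})$ collapses to $\mathbb{C} 1$ and one merely sees non-uniform discreteness coming from roots of unity in the center. I would therefore split according to whether the small elements may be taken non-scalar: if some $(\Lambda_n)_{1/2}$ contains a non-scalar unitary the construction above runs verbatim, while the purely scalar case must be absorbed by the center convention implicit in the statement (equivalently, by passing to the image in ${\rm PU}(A)$, exactly as in the von Neumann algebra part of the paper). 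The remaining verifications --- that a commutative $C^*$-algebra other than $\mathbb{C} 1$ contains a non-scalar self-adjoint element, and hence unitaries of arbitrarily small positive length, and that conjugation by $\Gamma$ preserves $B$ because it permutes the generators of the normal subgroup $(\Lambda_n)_{1/2}$ --- are routine.
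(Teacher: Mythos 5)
Your argument is exactly the one the paper has in mind: the paper in fact offers no proof at all (the corollary is declared ``immediate'' after Theorem \ref{cstar}), and the intended content is your dichotomy. A positive uniform gap $\varepsilon_0$ gives alternative (1), since a union along a chain of $\varepsilon_0$-discrete subgroups is again $\varepsilon_0$-discrete and Zorn's lemma applies; if $\varepsilon_0=0$, the small non-trivial elements $h_n$ of discrete overgroups $\Lambda_n$ lie in $(\Lambda_n)_{1/2}$, which is abelian and normal by Theorem \ref{cstar}, so $\Gamma$ normalizes it and hence the abelian $C^*$-algebra $B$ it generates, and one gets the non-uniformly discrete chain. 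Two small remarks: for the chain you do not need $B$ or the exponentials $\exp(ith)$ at all --- the groups $\langle \Gamma, h_1,\dots,h_k\rangle$ already form an ascending chain of subgroups of ${\rm U}(A)$ that is not uniformly discrete, and this works even in the scalar case; consequently the chain clause of alternative (2) is automatic whenever (1) fails, so the entire content of the dichotomy is the subalgebra clause.

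The scalar obstacle you flag is genuine, and it is a defect of the \emph{statement}, not of your proof: under the strict reading of ``non-trivial'' (meaning $\neq \mathbb{C}1$) the corollary is actually false, so no argument can close that case. The degenerate example is $A=\mathbb{C}$, $\Gamma=\{1\}$: the finite subgroups of $S^1$ are not uniformly discrete and none of them is maximal, so (1) fails, while $\mathbb{C}$ has no abelian $*$-subalgebra other than $0$ and $\mathbb{C}1$, so (2) fails. A non-degenerate example is $A=M_3(\mathbb{C})$ and $\Gamma=\pi({\rm Alt}(5))$ for the $3$-dimensional irreducible representation $\pi$: the finite groups $\pi({\rm Alt}(5))\cdot\mu_k$, with $\mu_k$ the scalar $k$-th roots of unity, show that (1) fails (and that no maximal discrete subgroup contains $\Gamma$), while any $\Gamma$-normalized abelian $*$-subalgebra $B\neq\mathbb{C}1$ would have to be unital (its unit commutes with the irreducible $\pi$) and would yield a non-trivial torus, the identity component of ${\rm U}(B)\cap{\rm SU}(3)$, normalized by $\pi({\rm Alt}(5))$ --- exactly what Proposition \ref{compact} rules out for a non-trivial irreducible representation of least dimension. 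So the statement tacitly ignores central unitaries, as you suspected; your proof is complete precisely in the cases where the statement is true as written, namely when $\varepsilon_0>0$ or when some $(\Lambda_n)_{1/2}$ contains a non-scalar element, and your honest flag on the remaining case identifies a real gap in the paper rather than in your argument.
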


It is natural to wonder about maximal finite subgroups of compact Lie groups, most notably of ${\rm PU}(n)$. The following result is elementary and surely known to experts.
\begin{proposition} \label{compact}
Let $G$ be a finite group. Let $\pi \colon G \to {\rm SU}(n)$ be a non-trivial irreducible representation and let $\bar \pi \colon G \to {\rm PU}(n)$ be the corresponding projective representation. Assume that $\pi(G)$ does not normalize a non-trivial torus in ${\rm SU}(n)$, equivalently, every $\pi(G)$-invariant abelian Lie subalgebra of $\mathfrak{su}(n)$ is trivial.
In particular, this holds when $G$ does not admit a non-trivial homomorphism to ${\rm Sym}(n)$, or, more generally,
when $\pi$ is non-trivial and irreducible of least dimension.
    
Then, the set of discrete subgroups of ${\rm PU}(n)$ containing $\bar\pi(G)$ is uniformly discrete. In particular, $\bar\pi(G)$ is contained in a maximal discrete subgroup.
\end{proposition}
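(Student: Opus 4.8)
The plan is to prove the stronger statement that the whole family is uniformly discrete and then deduce the existence of a maximal discrete subgroup formally. Throughout I use the operator-norm projective length $\bar\ell(g) = \min_{\lambda \in S^1}\|\lambda - \tilde g\|$ for $g \in {\rm PU}(n)$, where $\tilde g \in {\rm SU}(n)$ is any lift and $\|\cdot\|$ is the operator norm; this is a bi-invariant metric, in particular invariant under conjugation. I claim there is $\varepsilon > 0$ such that every discrete subgroup $\Gamma \subseteq {\rm PU}(n)$ containing $\bar\pi(G)$ satisfies $\bar\ell(g) \ge \varepsilon$ for all $g \in \Gamma \setminus \{1\}$. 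Granting this, uniform discreteness is immediate, and since ${\rm PU}(n)$ is compact a packing argument (disjointness of the balls $B(g,\varepsilon/2)$) bounds $|\Gamma|$ uniformly. An ascending chain of such subgroups then has a union that is again a subgroup, again discrete with injectivity radius $\ge \varepsilon$, and of bounded cardinality, so Zorn's lemma — or simply stabilization of the chain, or passing to a subgroup of maximal cardinality — produces a maximal discrete subgroup containing $\bar\pi(G)$.

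The heart of the matter is a Zassenhaus-type argument combined with a linearization. Suppose the uniform bound fails: then there are discrete subgroups $\Gamma_k \supseteq \bar\pi(G)$ with $\delta_k := \min\{\bar\ell(g) : g \in \Gamma_k \setminus\{1\}\} \to 0$. Fix a minimizer $g_k$ and consider its $\bar\pi(G)$-orbit $h_{k,s} := \bar\pi(s) g_k \bar\pi(s)^{-1}$, $s \in G$. Conjugation-invariance of $\bar\ell$ gives $\bar\ell(h_{k,s}) = \delta_k$, while the commutator estimate $\|1 - [u,v]\| \le 2\|1-u\|\|1-v\|$ from the introduction yields, after passing to closest lifts, $\bar\ell([h_{k,s}, h_{k,t}]) \le 2\delta_k^2$. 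For $k$ large this is smaller than $\delta_k$, so $[h_{k,s}, h_{k,t}] = 1$ in ${\rm PU}(n)$ by minimality. Lifting to ${\rm SU}(n)$, put $\tilde h_{k,s} = \pi(s)\tilde g_k \pi(s)^{-1}$ where $\tilde g_k$ is the lift of $g_k$ closest to $1$; the lift-independent commutator $[\tilde h_{k,s}, \tilde h_{k,t}]$ is then a central element of ${\rm SU}(n)$ at norm-distance $\le 2\delta_k^2$ from $1$, hence trivial for $k$ large since the nontrivial central elements stay a fixed distance from $1$. Thus the $\{\tilde h_{k,s}\}_{s \in G}$ pairwise commute in ${\rm SU}(n)$.

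Now linearize. Writing $X_k = \log \tilde g_k \in \mathfrak{su}(n)$ for the principal logarithm, which is small, we have $\tilde h_{k,s} = \exp({\rm Ad}(\pi(s)) X_k)$ with ${\rm Ad}(\pi(s)) X_k$ small, so ${\rm Ad}(\pi(s)) X_k$ is precisely the principal logarithm of $\tilde h_{k,s}$. Since the $\tilde h_{k,s}$ commute they are simultaneously diagonalizable, so their principal logarithms all lie in a common Cartan subalgebra $\mathfrak t_k \subseteq \mathfrak{su}(n)$. Consequently $V_k := {\rm span}\{{\rm Ad}(\pi(s)) X_k : s \in G\} \subseteq \mathfrak t_k$ is an abelian subalgebra; it is ${\rm Ad}(\pi(G))$-invariant because ${\rm Ad}(\pi(t))$ permutes the spanning set, and it is nonzero because $X_k \ne 0$ (as $g_k \ne 1$ and the only scalar in $\mathfrak{su}(n)$ is $0$). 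This contradicts the hypothesis that every $\pi(G)$-invariant abelian subalgebra of $\mathfrak{su}(n)$ is trivial, finishing the proof of uniform discreteness. I expect the main obstacle to be exactly this linearization step: one must guarantee that the commuting lifts produce logarithms in a single Cartan subalgebra and that $\pi(G)$-invariance survives passage to the span, which is why the principal logarithm and the conjugation-invariance of the metric must be handled with care.

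Finally, for the sufficient conditions I would show that a nontrivial $\pi(G)$-invariant abelian subalgebra $V \subseteq \mathfrak{su}(n)$ forces a nontrivial homomorphism to a symmetric group. Simultaneously diagonalizing $V$ gives a decomposition $\C^n = W_1 \oplus \cdots \oplus W_m$ into joint eigenspaces, with $m \ge 2$ since $V \ne 0$ and $m \le n$. The group $\pi(G)$ permutes the $W_i$ because ${\rm Ad}(\pi(g))$ preserves $V$, yielding a homomorphism $\rho \colon G \to {\rm Sym}(m) \hookrightarrow {\rm Sym}(n)$ which is nontrivial, and in fact transitive, by irreducibility of $\pi$ (otherwise the span of a single orbit would be a proper nonzero invariant subspace). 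Nontriviality immediately excludes the case where $G$ admits no nontrivial homomorphism to ${\rm Sym}(n)$. For the least-dimensional case, transitivity makes $H := {\rm Stab}(W_1)$ a proper subgroup of index $m \ge 2$, so the permutation representation ${\rm Ind}_H^G \mathbf 1$ on $G/H$ contains a nontrivial irreducible constituent of dimension at most $m - 1 < n$, contradicting the minimality of $\dim \pi = n$ among nontrivial irreducible representations.
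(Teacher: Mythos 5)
Your proof is correct, and it takes a genuinely different route from the paper's, although both arguments extract the same contradiction from a hypothetical failure of uniform discreteness, namely a non-trivial $\pi(G)$-invariant abelian Lie subalgebra of $\mathfrak{su}(n)$. The paper proceeds globally: discrete subgroups of ${\rm PU}(n)$ are finite, so the Jordan--Schur theorem yields abelian normal subgroups $A_i$ of uniformly bounded index in the offending subgroups $H_i$, and an ultralimit of the $A_i$ in the space of closed subgroups of ${\rm SU}(n)$ is a closed subgroup whose identity component is a torus normalized by $\pi(G)$; non-discreteness of the sequence forces this torus to be non-trivial (a step the paper leaves implicit). You proceed locally and more elementarily: a minimal-length element $g_k$, the contraction $\|1-[u,v]\|\leq 2\|1-u\|\,\|1-v\|$, and minimality force all $\bar\pi(G)$-conjugates of $g_k$ to commute (the lifted commutators being central of determinant one, hence trivial for large $k$), and logarithms of the commuting lifts then span the invariant abelian subalgebra. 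Your route avoids both Jordan--Schur and the Chabauty/ultralimit formalism, gives quantitative control, and is arguably more complete than the paper's sketch; the cost is exactly the linearization bookkeeping you flag. One detail to tighten there: the lift of $g_k$ realizing $\bar\ell(g_k)$ is in general only a ${\rm U}(n)$-lift, since the minimizing scalar need not be an $n$-th root of unity; so either pass to a nearby ${\rm SU}(n)$-lift (which changes the estimates only by a bounded factor) or subtract the scalar part $\tfrac{1}{n}{\rm tr}(X_k)\cdot 1$ from the logarithms so that $V_k\subset\mathfrak{su}(n)$ --- neither modification affects commutativity, $\pi(G)$-invariance, or non-triviality, and your centrality argument survives because commutators of unitary lifts automatically have determinant one. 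Finally, your treatment of the addendum also differs: the paper notes ${\rm dim}_{\mathbb R}\,\mathfrak{a}\leq n-1$, so a least-dimensional irreducible $\pi$ must act trivially on $\mathfrak{a}$, and concludes by Schur's lemma, whereas you decompose $\mathbb{C}^n$ into joint eigenspaces of the abelian subalgebra and obtain a transitive permutation action of $G$ on them; this has the added value of explicitly proving the ${\rm Sym}(n)$ sufficient condition, which the paper asserts but never argues, while your induced-representation count settles the least-dimension case just as the paper's dimension count does.
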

\begin{proof} Assume that there exists a sequence of finite subgroups $H_0, H_1, H_2, \cdots$ containing $\pi(G)$ which is getting less and less discrete in ${\rm SU}(n)$. By the Jordan--Schur theorem we get a sequence of abelian normal subgroups $A_i \subset H_i$ of uniformly bounded index. Consider an ultralimit $A$ of the sequence $(A_i)_i$ in the space of closed subgroups of ${\rm SU}(n)$ and let $A_0$ be the connected component of the identity of $A$. Note that $A_0$ is normalized by $\pi(G)$. In order to obtain a contradiction, we will show that $A_0$ is trivial.
We denote the Lie algebra of $A_0$ by $\mathfrak{a}$, which is naturally acted upon by  the group $G$. We conclude that $\mathfrak{a}$ is trivial and hence $A_0$ is trivial. This finishes the proof.

For the proof of the addendum, note that ${\rm dim}_{\mathbb R} (\mathfrak{a}) \leq n-1$, so that if $\pi$ was irreducible of least dimension, the action of $\pi(G)$ on $\mathfrak{a}$ must be trivial. Thus, if $\mathfrak{a}$ is non-trivial, we obtain a non-scalar matrix commuting with $\pi(G)$, contrary to our assumption that $\pi$ was irreducible.
\end{proof}

Let us remark that by classification of finite subgroups of $\rm{SO}(3) \cong \rm{PU}(2)$ the abelian rotation subgroup $\mathbb Z/n$ is not contained in any maximal finite subgroup for $n > 5$: the only finite subgroups $\rm{SO}(3)$ which come in question are dihedral subgroups, and one can always embed a dihedral subgroup into a bigger dihedral subgroup.

\section*{Acknowledgments}

This research was supported by the ERC Consolidator Grant No.\ 681207.

\begin{bibdiv}
\begin{biblist}

\bib{MR212021}{article}{
   author={Allan, N.D.},
   title={Maximality of some arithmetic groups},
   journal={An. Acad. Brasil. Ci.},
   volume={38},
   date={1966},
   pages={223--227},
   issn={0001-3765},
}

\bib{MR2198218}{article}{
   author={Belolipetsky, M.},
   author={Lubotzky, A.},
   title={Finite groups and hyperbolic manifolds},
   journal={Invent. Math.},
   volume={162},
   date={2005},
   number={3},
   pages={459--472},
}

\bib{MR2381807}{article}{
   author={Collins, M.J.},
   title={Bounds for finite primitive complex linear groups},
   journal={J. Algebra},
   volume={319},
   date={2008},
   number={2},
   pages={759--776},
}

\bib{MR454659}{article}{
   author={Connes, A.},
   title={Classification of injective factors. Cases $II_{1},$
   $II_{\infty },$ $III_{\lambda },$ $\lambda \not=1$},
   journal={Ann. of Math. (2)},
   volume={104},
   date={1976},
   number={1},
   pages={73--115},
}

\bib{MR228437}{article}{
   author={Helling, H.},
   title={Bestimmung der Kommensurabilit\"{a}tsklasse der Hilbertschen
   Modulgruppe},
   language={German},
   journal={Math. Z.},
   volume={92},
   date={1966},
   pages={269--280},
}

\bib{MR3556970}{article}{
   author={Hull, M.},
   author={Osin, D.},
   title={Transitivity degrees of countable groups and acylindrical
   hyperbolicity},
   journal={Israel J. Math.},
   volume={216},
   date={2016},
   number={1},
   pages={307--353},
}

\bib{MR4193626}{article}{
   author={Jacobson, B.},
   title={A mixed identity-free elementary amenable group},
   journal={Comm. Algebra},
   volume={49},
   date={2021},
   number={1},
   pages={235--241},
}

\bib{MR41145}{article}{
   author={Kuranishi, M.},
   title={On everywhere dense imbedding of free groups in Lie groups},
   journal={Nagoya Math. J.},
   volume={2},
   date={1951},
   pages={63--71},
}

\bib{MR0209363}{article}{
   author={Gutnik, L.A.},
   author={Pjatecki\u{\i}-\v{S}apiro, I.I.},
   title={Maximal discrete subgroups of a unimodular group},
   language={Russian},
   journal={Trudy Moskov. Mat. Ob\v{s}\v{c}.},
   volume={15},
   date={1966},
   pages={279--295},
}

\bib{MR1372533}{article}{
   author={Popa, S.},
   title={Free-independent sequences in type ${\rm II}_1$ factors and
   related problems},
   note={Recent advances in operator algebras (Orl\'{e}ans, 1992)},
   journal={Ast\'{e}risque},
   number={232},
   date={1995},
   pages={187--202},
   issn={0303-1179},
   review={\MR{1372533}},
}

\bib{MR171781}{article}{
   author={Ramanathan, K. G.},
   title={Discontinuous groups. II},
   journal={Nachr. Akad. Wiss. G\"{o}ttingen Math.-Phys. Kl. II},
   volume={1964},
   date={1964},
   pages={145--164},
}

\bib{MR29918}{article}{
   author={Toyama, H.},
   title={On discrete subgroups of a Lie group},
%   note={\{Volume numbers not printed on issues until Vol. {\bf 7},
%   (1955)\}},
   journal={K\textoverline{o}dai Math. Sem. Rep.},
   volume={1},
   date={1949},
   number={2},
   pages={36--37},
   issn={0023-2599},
}

\bib{MR374340}{article}{
   author={Wang, S.P.},
   title={On subgroups with property $P$ and maximal discrete subgroups},
   journal={Amer. J. Math.},
   volume={97},
   date={1975},
   pages={404--414},
   issn={0002-9327},
}

\bib{MR1217253}{book}{
   author={Voiculescu, D. V.},
   author={Dykema, K. J.},
   author={Nica, A.},
   title={Free random variables},
   series={CRM Monograph Series},
   volume={1},
 %  note={A noncommutative probability approach to free products with
  % applications to random matrices, operator algebras and harmonic analysis
  % on free groups},
   publisher={American Mathematical Society, Providence, RI},
   date={1992},
   pages={vi+70},
}

\bib{MR3069692}{article}{
   author={Zassenhaus, H.},
   title={Beweis eines Satzes \"{u}ber diskrete Gruppen},
   language={German},
   journal={Abh. Math. Sem. Univ. Hamburg},
   volume={12},
   date={1937},
   number={1},
   pages={289--312},
   issn={0025-5858},
}

\end{biblist}
\end{bibdiv} 

\end{document}